\newtheorem{prop}{Proposition}[section]
\newtheorem{thm}[prop]{Theorem}
\theoremstyle{definition}
\newtheorem{eg}[prop]{Example}
\def \ds {\displaystyle} \def \tss {\textsuperscript}             \def \Fix {\text{Fix}} \def \Red {\text{Reduce}} \def \Rem {\text{Remove}}
\author{Yonah Biers-Ariel}
\title{Counting Words Avoiding a Short Increasing Pattern and the Pattern 1k\dots2}
\begin{document}\sloppy
\maketitle

\begin{abstract}
We find finite-state recurrences to enumerate the words on the alphabet $[n]^r$ which avoid the patterns 123 and $1k(k-1)\dots2$, and, separately, the words which avoid the patterns 1234 and $1k(k-1)\dots2$.
\end{abstract}

\section{Introduction}
A word $W = w_1w_2 \dots w_n$ on an ordered alphabet \emph{contains} the pattern $p_1p_2\dots p_k$ if there exists a (strictly) increasing sequence $i_1,i_2,\dots, i_k$ such that $w_{i_r} < w_{i_s}$ if and only if $p_r < p_s$ and $w_{i_r} > w_{i_s}$ if and only if $p_r > p_s$. If both $W$ and $p_1p_2 \dots p_k$ are permutations, then $w_{i_r} < w_{i_s}$ if and only if $p_r < p_s$ is an equivalent and more common definition. If $W$ does not contain $p_1p_2 \dots p_k$, then $W$ \emph{avoids} it. The study of pattern-avoiding permutations began with Donald Knuth in \textit{The Art of Computer Programming}, and has become an active area of combinatorial research. See \cite{Vatter} for an in-depth survey of the major results in this field.

The study of pattern-avoiding words other than permutations is comparatively recent, being inaugurated in \cite{Regev} and greatly expanded in \cite{Burstein}. Much is known about avoidance properties for specific patterns and families of patterns; for instance, Burstein counted the number of length-$n$ words with letters in $[k]=\{1,2,\dots,k\}$ which avoid all patterns in $S$ for all $S \subseteq S_3$. Meanwhile, \cite{Mansour} found generating functions for the number of such words which avoid both 132 and one of a large family of other patterns including $12\dots l$ and $l12\dots(l-1)$. 

Other authors have looked at words with letters in $[n]$ where each letter must appear exactly $r$ times (we will say that these are the words on $[n]^r$). These words are a direct generalization of permutations, which are given by the $r=1$ case. In \cite{Shar}, the authors created an algorithm to find the ordinary generating functions enumerating words on $[n]^r$ which avoid 123, while \cite{Zeil1d} found that the generating functions enumerating words on $[n]^r$ avoiding $12\dots l$ are D-finite. 

We study this second type of word. Our contribution is to find finite, linear recurrences for the numbers of words on $[n]^r$ that avoid 123 and $1k(k-1)\dots2$ as well as the ones that avoid 1234 and $1k(k-1)\dots2$. It is well known (see \cite{Cfinite} for instance) that this fact implies that these quantities have rational generating functions, and, moreover, gives a way to compute them (in principle if not always in practice - see Section \ref{compute}). While generating functions were previously found in \cite{Kratten} for the permutations avoiding 123 and $1k(k-1)\dots2$, this is the first time that such a result has been extended to these more general words. In the 1234 and $1k(k-1)\dots2$ case, the result was, to the best of our knowledge, previously not even known for permutations with $k$ as small as 5.

\section{Words Avoiding 123}\label{123}
We begin this section with an algorithm for counting 123 avoiding permutations from \cite{DrZ}. For $L = [l_1,l_2,\dots l_n]$, let $A(L)$ be the number of words containing $l_i$ copies of $i$ for $1 \le i \le n$ which avoid 123. The following result allows us to quickly compute $A(L)$.
\begin{thm} The following recurrence holds: 
\begin{displaymath}
A(L) = \sum_{i=1}^n A([l_1,l_2,\dots,l_{i-1}, l_i-1, l_{i+1}+l_{i+2}+\dots+l_n]).
\end{displaymath}
\end{thm}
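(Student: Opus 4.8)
The plan is to prove the recurrence by a bijection that classifies each 123-avoiding word according to its \emph{first} letter. Fix a word $W = w_1 w_2 \dots$ counted by $A(L)$, and suppose its first letter is $w_1 = i$. Write $W = i \cdot W'$, where $W'$ is the word obtained by deleting this leading letter; note that $W'$ contains $l_i - 1$ copies of $i$ and $l_j$ copies of $j$ for $j \ne i$. I would first establish the structural claim that $W$ avoids 123 if and only if two conditions hold: (a) $W'$ itself avoids 123, and (b) the subsequence of $W'$ consisting of the letters with value strictly greater than $i$ is weakly decreasing. This is because a 123-pattern in $W$ either avoids position $1$ entirely (captured by (a)) or uses $w_1 = i$ as its smallest element, in which case it requires two later positions $q < s$ with $i < w_q < w_s$; forbidding all such configurations is exactly the statement that the large letters of $W'$ avoid the pattern 12, i.e. appear in weakly decreasing order.

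The key observation is that condition (b) forces the relative order of all letters exceeding $i$ to be the unique weakly decreasing arrangement, so their individual identities carry no information beyond their total multiplicity $m := l_{i+1} + l_{i+2} + \dots + l_n$. This is precisely what motivates collapsing them into a single ``super-letter.'' I would define the forward map $\phi$ sending $W'$ to the word $\phi(W')$ on the reduced alphabet $\{1, \dots, i-1, i, B\}$ obtained by replacing every letter with value $> i$ by a new top letter $B$. By construction $\phi(W')$ contains $l_j$ copies of $j$ for $j < i$, exactly $l_i - 1$ copies of $i$, and $m$ copies of $B$, matching the list $[l_1, \dots, l_{i-1}, l_i - 1, l_{i+1} + \dots + l_n]$. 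The reverse map $\psi$ takes a word $U$ on the reduced alphabet and expands its $B$'s back into the original large letters, read left to right in weakly decreasing order; this is well defined and is a two-sided inverse of $\phi$ precisely because $W'$ already has its large letters weakly decreasing, so no information is lost when collapsing and the reconstruction is forced.

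It then remains to check that $\phi$ and $\psi$ both preserve 123-avoidance, which is the technical heart of the argument. For the forward direction I would argue that collapsing distinct large values to the common value $B$ can only merge values, never create a new strict inequality, so any 123-pattern in $\phi(W')$ must have its top entry equal to $B$ (the maximum of the reduced alphabet), and lifting that pattern back to $W'$ produces a genuine 123-pattern there. For the reverse direction, given a 123-pattern $u_p < u_q < u_s$ in $\psi(U)$, I would split on whether $u_q \le i$ or $u_q > i$: in the former case the images of positions $p,q,s$ in $U$ still form a 123-pattern, while in the latter case $u_q$ and $u_s$ are two large letters in strictly increasing order, contradicting the weakly decreasing arrangement produced by $\psi$. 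Combining both directions shows $\phi$ is a bijection between 123-avoiding words with first letter $i$ and 123-avoiding words counted by $A([l_1, \dots, l_{i-1}, l_i - 1, l_{i+1} + \dots + l_n])$; summing over $i = 1, \dots, n$ yields the recurrence, with the harmless conventions that a list containing a negative entry contributes $0$ (covering the case $l_i = 0$) and that trailing zeros may be dropped (covering the case $i = n$, where the empty sum gives $m = 0$). The main obstacle I anticipate is the careful verification of avoidance in the reverse direction, since it is the step where the weakly-decreasing reconstruction, rather than the original word, must do the work.
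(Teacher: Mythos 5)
Your proposal is correct and follows essentially the same route as the paper: classify words by their first letter $i$, observe that the letters exceeding $i$ must appear in (weakly) decreasing order, and biject by collapsing them into a single top letter (your $B$, the paper's $i+1$) with the inverse re-expanding them in decreasing order. Your write-up is actually more careful than the paper's about verifying that both directions of the bijection preserve 123-avoidance and about the degenerate cases ($l_i=0$, $i=n$), though the intermediate claim that every 123-pattern in $\phi(W')$ must end in $B$ is unnecessary (and not quite true) --- the pull-back of strict inequalities suffices on its own.
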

\begin{proof} Let $A_i(L)$ be the number of words with letter counts $l_1,l_2,\dots,l_n$ which avoid 123 and begin with the letter $i$. We will biject the words counted by $A_i(L)$ with those counted by $A([l_1,l_2,\dots,l_{i-1}, l_i-1, l_{i+1}+l_{i+2}+\dots+l_n])$. Let $W=iw_2,\dots,w_t$ have letter counts in $L$ and let $f(W)$ be given by removing the initial $i$ from $W$ and then replacing all letters greater than $i$ with $i+1$. Also, for some word $V = v_1v_2,\dots,v_{t-1}$ with letter counts in $[l_1,l_2,\dots,l_{i-1},l_i-1,l_{i+1}+\dots+l_n]$, let $f^{-1}$ be given by replacing the sequence of $i+1$'s with $l_{n} \text{ }n$'s, $l_{n-1} \text{ }n-1$'s, and so on in that order, and then prepending $i$ to this word.

We claim that $f^{-1}$ is the inverse of $f$. To find $f(f^{-1}(V))$, we would replace the sequence of $i+1$s with $l_{n} \text{ }n$'s, $l_{n-1} \text{ }n-1$'s, and so on, and then prepend an $i$, before removing that $i$ and replacing all those letters larger than $i$ with $i+1$ again, giving us back $V$. To find $f^{-1}f(W)$, we would replace all the letters larger than $i$ with $i+1$ and remove the initial $i$, before replacing that $i$ and putting back all the letters larger than $i$ (note that they had to be in descending order to begin with or else $W$ would contain a 123 pattern). Thus, $A_i(L)=A([l_1,l_2,\dots,l_{i-1}, l_i-1, l_{i+1}+l_{i+2}+\dots+l_n])$ and summing over all $i$ gives the promised equality.
\end{proof}

This technique can be extended to many more avoidance classes. In this section we use it to count words avoiding both $123$ and $1k(k-1)\dots2$ simultaneously. We first fix $k \ge 3$, choose integers $n$ and $r$, and consider the number of words on the alphabet $[n]^r$ which avoid both $123$ and $1k(k-1)\dots2$. This time, however, we will need to keep track of more information than just the letter counts. To that end, we consider the set of words $A(r,a,b,L)$ where $r,a,$ and $b$ are integers, and $L=[l_1,l_2,\dots,l_t]$. This is the number of words with $r$ copies of the letters $1, \dots, a$, $b$ copies of the letter $a+1$, and $l_i$ copies of the letter $a+1+i$ which not only avoid both 123 and $1k(k-1)\dots2$, but would still avoid both those patterns if $a+1$ were prepended to the word. Note that this condition implies $t \le k-2$ because any sequence of $k-1$ distinct letters will either contain an increasing subsequence of length 2 (and hence create a 123 pattern) or will be entirely decreasing (and hence create a $1k(k-1)\dots2$ pattern).

Before we state the next theorem, we describe in more human-friendly language the algorithm that it suggests. Suppose that we are building a word $W$. Up to this point, the smallest letter which has been used is $a+1$, and $r-b$ copies of it have been used. We have a list $L=[l_1,l_2,\dots l_t]$ indicating how many copies of each letter greater than $a+1$ remain to be added, and we note that all these letters must be added in reverse order. To complete $W$, we need to add $r$ copies each of $1,2,\dots,a$, $b$ copies of $a+1$, and $l_i$ copies of $a+1+i$ for all $1 \le i \le t$. Examine $W$'s next letter $w_1$; considering only the requirement that $w_1$ be succeeded by at most $k-2$ distinct letters larger than it, we find that $w_1$ can be any element of $\{a+2, a+3,\dots,a+1+t\}$ or else it can be an element of $\{a-(k-2)+t+1, a-(k-2)+t+2, \dots, a+1\}$. But, we also need to consider the requirement that prepending $a+1$ to the new word will not create a 123 pattern. Therefore, $w_1 \in \{a-(k-2)+t+1, a-(k-2)+t+2, \dots, a+1, a+1+t\}$. If $w_1 = a+1+t$ or $a+1$, then removing it gives a word counted by $A(r,a,b,L')$ or $A(r,a,b-1,L)$ respectively where $L' = [l_1,\dots,l_{t-1},l_t -1]$. Otherwise, we need to add all the letters larger than $w_1$ to $L$ in order to ensure that future letters don't create 123 patterns.

Since we want $L$ to contain only letter counts for letters which will be added to $W$, i.e. we don't want it to contain 0, define the operator $R$ which removes all the zeroes from the list $L$.

\begin{thm} If $b \ge 1$, then
\begin{align*} A(r,a,b,L) &= \sum_{i = a-(k-2)+t+1}^{a} A(r,i-1,r-1,[\underbrace{r,r,\dots,r,}_{a-i \text{ copies}} b, l_1,\dots,l_t])
\\
&+ A(r,a,b-1,L) + A(r,a,b,R([l_1,l_2,\dots, l_t -1])).
\end{align*}
If $b =0$, then
\begin{align*} A(r,a,b,L) &= \sum_{i = a-(k-2)+t+1}^{a} A(r,i-1,r-1,[\underbrace{r,r,\dots,r,}_{a-i \text{ copies}} l_1,\dots,l_t])
\\
&+A(r,a,b,R([l_1,l_2,\dots, l_t -1])).
\end{align*}
\end{thm}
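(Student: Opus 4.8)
The plan is to partition the words counted by $A(r,a,b,L)$ according to their first letter $w_1$, in the same spirit as the auxiliary quantities $A_i(L)$ used to prove the opening theorem, and to exhibit a count-preserving bijection from each block onto the words enumerated by one of the terms on the right-hand side. Before constructing the bijections I would pin down which values $w_1$ may take, and two structural facts drive this. First, since $W$ avoids $123$ and $w_1$ occurs before everything else, any two letters exceeding $w_1$ must appear in weakly decreasing order, for otherwise they would complete a $123$ with $w_1$; hence the letters larger than $w_1$ form a decreasing run of distinct values. Second, avoidance of $1k(k-1)\dots2$ forces the number of \emph{distinct} values exceeding $w_1$ to be at most $k-2$, because $w_1$ followed by $k-1$ strictly decreasing larger letters is exactly a $1k(k-1)\dots2$. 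Combining these with the requirement that prepending $a+1$ not create a $123$ (which rules out every large first letter except the maximum, since a smaller prepended $a+1$ together with a non-maximal large $w_1$ and a still-larger later letter would be an ascending triple) yields the admissible set $w_1\in\{a-(k-2)+t+1,\dots,a+1\}\cup\{a+1+t\}$ described before the statement.

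Next I would treat the three types of first letter separately. If $w_1=a+1+t$, the unique largest letter present, I delete it; the remainder still avoids both patterns and, since deleting a letter from $(a+1)W$ cannot create an occurrence of any pattern, it still meets the prepend-$(a+1)$ condition, so it is counted by $A(r,a,b,R([l_1,\dots,l_t-1]))$, with $R$ discarding the now-empty top block when $l_t=1$. If $w_1=a+1$, available only when $b\ge1$, deletion simply lowers the count of $a+1$ by one and lands in $A(r,a,b-1,L)$; here prepending $a+1$ to a word of the target class reproduces $W$, and prepending a second $a+1$ is harmless since the larger letters after it still form a decreasing run of length $t\le k-2$. The substantive case is $w_1=i$ with $i\le a$: I delete the leading $i$, reset the boundary letter to $i$ (so the new parameters are $a'=i-1$ and $b'=r-1$), and absorb every letter exceeding $i$ into the list, which by the decreasing-run observation is precisely $[\underbrace{r,\dots,r}_{a-i},b,l_1,\dots,l_t]$ (omitting $b$ when $b=0$). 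The image is counted by $A(r,i-1,r-1,\cdot)$ because its prepend-$i$ condition is exactly the assertion that $iw_2\cdots=W$ avoids both patterns.

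To finish I would fix the summation limits by reading off the cap ``at most $k-2$ distinct larger values'' in the third case. With $w_1=i\le a$ the distinct values exceeding $i$ are $i+1,\dots,a$, together with $a+1$ precisely when $b\ge1$, together with $a+2,\dots,a+1+t$; requiring this total to be at most $k-2$ is what produces the lower index of the sum, and the presence or absence of the $a+1$ block is exactly what shifts the lower limit between the $b\ge1$ and $b=0$ formulas (and why the $A(r,a,b-1,L)$ term is simply absent when $b=0$). I would also dispose of the boundary bookkeeping: the operator $R$ guarantees the lists never carry a spurious zero, and when $L$ is empty the ``remove the top of $L$'' term does not occur at all.

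The main obstacle will be the $w_1=i$ case. I must verify that deletion-and-relabeling is a genuine bijection, not merely an injection, onto the words counted by $A(r,i-1,r-1,[\underbrace{r,\dots,r}_{a-i},b,l_1,\dots,l_t])$; concretely, the inverse (prepend $i$) must return a word that begins with $i$ and still satisfies the original prepend-$(a+1)$ condition, which reduces to checking that $(a+1)\,i\,w_2\cdots$ avoids both patterns. This is where the interplay of the two prepend conditions, and the precise value of the lower summation index, must be handled carefully. Once this case is secured, the other two are routine, and because every counted word has a unique first letter, the three blocks partition the words counted by $A(r,a,b,L)$ and their cardinalities sum to the claimed recurrence.
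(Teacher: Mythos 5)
Your overall strategy is exactly the paper's: classify the words counted by $A(r,a,b,L)$ by their first letter $w_1$, and delete-and-relabel to land in the corresponding term on the right-hand side. Your structural observations (letters exceeding $w_1$ form a weakly decreasing run, hence at most $k-2$ distinct values may exceed $w_1$, hence the restricted set of admissible first letters) are the same ones the paper uses, and your treatment of the $w_1=a+1+t$ and $w_1=a+1$ cases matches the paper's.

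There is, however, a genuine problem in your $b=0$ case, and your own counting exposes it. You correctly note that for $w_1=i\le a$ the number of distinct values exceeding $i$ is $(a-i)+t$, plus one more exactly when $b\ge1$; requiring this to be at most $k-2$ gives the lower limit $a-(k-2)+t+1$ when $b\ge1$ but $a-(k-2)+t$ when $b=0$. You then claim this is ``exactly what shifts the lower limit between the $b\ge1$ and $b=0$ formulas'' --- but the two displayed formulas use the \emph{same} lower limit $a-(k-2)+t+1$. So the recurrence your argument actually establishes for $b=0$ has one extra summand ($i=a-(k-2)+t$) that the statement lacks: as written, your proposal does not prove the stated $b=0$ identity, and you did not notice the mismatch. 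For what it is worth, your version appears to be the correct one and the paper shares the blind spot: its pre-theorem analysis of admissible first letters implicitly counts $a+1$ among the letters exceeding $w_1$ (valid only when $b\ge1$), and its proof handles $b=0$ by merely dropping the $A(r,a,b-1,L)$ term without enlarging the range of $i$. A quick sanity check: for $k=3$, $r=1$, the quantity $A(1,2,0,[\,])$ counts the words $12$ and $21$, so it equals $2$, while the stated $b=0$ recurrence gives only $A(1,1,0,[\,])=1$; the missing word $12$ begins with $1=a-(k-2)+t$. Separately, you defer rather than complete the verification you call the ``main obstacle'' (that prepending $i$ to a word counted by $A(r,i-1,r-1,\cdot)$ preserves the prepend-$(a+1)$ condition); this is in fact quick --- in $iW'$ every letter exceeding $a+1$ also exceeds $i$ and so lies in the weakly decreasing run, ruling out a $123$ through the prepended $a+1$, and only $t\le k-2$ distinct letters exceed $a+1$, ruling out a $1k(k-1)\cdots2$ --- and the paper asserts it with no more detail than you do, so I count that as minor; the $b=0$ discrepancy is the substantive issue to fix, either by correcting your parenthetical claim to match the statement (which your argument cannot support) or, better, by flagging that the statement itself needs the extra term.
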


\begin{proof} As noted in the previous paragraph, $w_1 \in \{a - (k-2) +t +1, a-(k-2)+t+2,\dots,a+1,a+1+t\}$; each member of this set corresponds to a term of the summation. Fix $i$ with $a-(k-2)+t+1 \le i \le a$, and consider those words $W$ with $w_1 =i$. Suppose we remove $w_1$ from one of these words to form a word $W'$. We are left with $r$ copies of the letters 1 through $i-1$, $r-1$ copies of $i$ (because $i \le a$ there were $r$ copies of it including $w_1$), and $l'_j$ copies of $(i-1)+1+j$ where $L'=[l'_1,\dots,l'_u]=[\underbrace{r,r,\dots,r}_{a-i \text{ copies}},b,l_1,\dots,l_t]$. We are left with a word which avoids 123 and $1k(k-1)\dots2$, and, moreover, avoids 123 even when $i$ is prepended. Furthermore, prepending an $i$ to any word fitting this description gives a word counted by $A(r,a,b,L)$, and so the number of words counted by $A(r,a,b,L)$ which begin with $i$ is $A(r,i-1,r-1,[\underbrace{r,r,\dots,r,}_{a-i \text{ copies}} b, l_1,\dots,l_t])$ for all $a-(k-1)+t+1 \le i \le a$.

This leaves two other possibilities for $w_1$: $a+1$ and $a+t+1$. If $w_1 = a+1$, then the only difference between the letter counts of $W$ and $W'$ is that $W$ has $b$ copies of $a+1$ and $W'$ has only $b-1$. In terms of avoidance, both $W$ and $W'$ avoid 123 and $1k(k-1)\dots2$ even with $a+1$ prepended. Thus, the number of $W$ with $w_1=a+1$ is $A(r,a,b-1,L)$ as long as $b \ge 1$, and, 0 if $b=0$.

Similarly, if $w_1=a+t+1$, then the only difference between the letter counts of $W$ and $W'$ is that $W$ has $l_t$ copies of $a+t+1$ and $W'$ has $a+t$. Just as in the previous case, the avoidance properties are identical and so the number of $W$ with $w_1=a+t+1$ is $A(r,a,b,R([l_1,l_2,\dots,l_t-1]))$ where we needed to remove $l_t-1$ if it is zero so that we know that the next letter is allowed to be $l_{t-1}$.

Summing over all possible $w_1$ now gives the promised result.

\end{proof}

\section{Words Avoiding 1234}\label{1234}
Just as we can find recurrences, and therefore generating functions, for words avoiding 123 and $1k(k-1)\dots2$, we can (in principle at least) find a similar system of recurrences and generating functions for words on $[n]^r$ avoiding 1234 and $1k(k-1)\dots2$. The idea is to construct a word $W$ one letter at a time, and with each letter see if we have made a forbidden pattern. Unfortunately, doing this naively would require keeping track of all previous letters in $W$, denying us a finite recurrence. By only paying attention to the letters that could actually contribute to a forbidden pattern, though, we find that we actually only need to retain a bounded quantity of information regarding $W$.

\subsection{The Existence of a Finite Recurrence}
In order to discuss the structure of words avoiding 1234 and $1k(k-1)\dots2$, we recall one common definition and introduce some new ones. A \emph{left-to-right minimum} (LTR min) is a letter of a word which is (strictly) smaller than all the letters which precede it. To an LTR min, we associate an \emph{activated sequence} which consists of all the letters following the LTR min which are (again strictly) larger. Notice that, since 1234 is forbidden, anytime a letter $w$ is preceded by some smaller letter, all the letters larger than and following $w$ must occur in reverse order. We call these letters \emph{fixed}. If all the letters greater than an LTR min are fixed, then it is either guaranteed or impossible that the LTR min and its activated sequence form a $1k(k-1)..2$ pattern; in this case we say that the activated sequence has been \emph{deactivated} and we no longer consider it an activated sequence. If an LTR min with an empty activated sequence is followed by another LTR min (or another copy of itself), then any forbidden pattern using the first LTR min could also be made using the second LTR min; we say that the first LTR min is \emph{superceded} and no longer consider it an LTR min.

With these definitions, we are nearly ready to state the actual set we will be recursively enumerating. Let $r,k,$ and $a$ be integers, let $\mathcal{S}=[S_1=[s_{1,1},\dots,s_{1,q_1}],\dots, S_u=[s_{u,1},\dots,s_{q_u}]]$ be a list of lists whose elements are in $[t]$, let $M=[m_1,\dots,m_u]$ be a list with elements in $[t]$, and let $L=[l_1,\dots,l_t]$ be a list with elements in $\{0\} \cup [r]$. Suppose we are building a word, and so far the letters $1,\dots,a$ have never been used, while the letters greater than $a+t+1$ have been entirely used up and, moreover, are not LTR mins or in any activated sequence. Suppose this word has LTR mins $m_1+a,\dots,m_s+a$ with corresponding activated sequences $[s_{1,1}+a,\dots,s_{1,q_1}+a],\dots,[s_{u,1}+a,\dots,s_{u,q_u}+a]$ (excluding LTR mins which have been superceded or whose sequences have been deactivated). Finally, assume that the word so far avoids 1234 and $1k(k-1)\dots2$, and that $L_i$ copies of $a+i$ remain to be placed for all $1 \le i \le t$ (recall that $r$ copies of $1,\dots,a$ and 0 copies of $a+t+1,a+t+2,\dots$ remain to be placed). Then, the number of ways to completing the word is defined to be $A(r,k,a,M,\mathcal{S},L)$.

Our plan is to show that (i) $A$ is well defined, (ii) all arguments of $A$ besides $a$ take on finitely many values, and (iii) $A$ satisfies a recurrence in which a particular non-negative function of its arguments is always reduced (until the base case). Before carrying out this plan, though, we provide an example to make sure our definitions are clear.

\begin{eg}\label{ex}
Suppose we are building a word on the alphabet $[9]^2$ to avoid 1234 and 15432, and so far have 69945. The LTR mins are 6 and 4 with corresponding activated strings 99 and 5. However, 99 has been deactivated because all the letters greater than its LTR min are fixed and must occur in decreasing order. Thus, the number of ways to complete this word is given by $A(2,5,3,[1],[[2]],[1,1,1,2,2]).$ 
\end{eg}

Notice that the number of ways is also given by $A(2,5,2,[2],[[3]],[2,1,1,1,2,2])$. While this is not a problem in principle, it would be nice to have a canonical way of expressing this quantity, and so we will eventually insist that $L$ have a particular length given by a function of $r$ and $k$.

\begin{thm}\label{well-defined}
$A$ is well defined.
\end{thm}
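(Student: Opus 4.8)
The content of the statement is that the quantity $A(r,k,a,M,\mathcal{S},L)$ depends only on the listed parameters and not on the particular partial word $W$ that realizes the described state. The plan is therefore to fix the parameters, take two words $W_1$ and $W_2$ that both realize the state $(r,k,a,M,\mathcal{S},L)$, and show that they have exactly the same set of valid completions. Since $r$, $a$, and $L$ determine the multiset of letters still to be placed, $W_1$ and $W_2$ admit the same candidate suffixes $V$, so it suffices to prove that, for every such suffix $V$, the word $W_1V$ avoids $1234$ and $1k(k-1)\dots2$ if and only if $W_2V$ does.

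Both $W_1$ and $W_2$ already avoid the two forbidden patterns (this is part of the hypothesis that they realize the state), so any new occurrence in $W_iV$ must use at least one letter of $V$. I would classify such an occurrence by its first (equivalently, smallest) element, which is the ``$1$'' of the pattern. If the ``$1$'' lies in $V$, then because it is both leftmost in position and least in value, the entire occurrence lies in $V$, and its existence is independent of $W_i$; hence the interesting case is when the ``$1$'' lies in $W_i$.

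The key lemma I would prove is that every forbidden occurrence whose ``$1$'' lies in $W_i$ can be rerouted so that its ``$1$'' sits at a live LTR minimum, with all of its $W_i$-letters lying in that minimum's activated sequence. First, one slides the ``$1$'' back to the nearest preceding LTR minimum: if the original first letter has value $v$ at position $p$ and that minimum has value $m\le v$ at an earlier position, then every remaining pattern letter (value $>v\ge m$, later position) still forms an occurrence, so the replacement is legitimate. Second, if the minimum reached has been superseded, then every pattern letter above the ``$1$'' is larger than, and positioned after, the smaller superseding minimum, so the occurrence reroutes to it; iterating terminates at a live minimum, since the smallest (last) LTR minimum is never superseded. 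By definition the $W_i$-letters of the resulting occurrence then lie in the activated sequence of this live minimum, so the occurrence is visible from the data $(a,M,\mathcal{S})$ together with $V$ alone.

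To finish, I would show that a deactivated sequence can never be extended by $V$ into a new forbidden pattern: once every letter exceeding its LTR minimum is fixed, the future larger letters are forced into decreasing order, so whether that minimum completes a $1k(k-1)\dots2$ pattern is already determined (and must be impossible, since $W_i$ avoids the pattern), while a forced-decreasing tail cannot supply the increasing letters needed for a new $1234$. Combining these observations, the presence of a forbidden pattern in $W_iV$ is a function of $(a,M,\mathcal{S},L)$ and $V$ only; since $W_1$ and $W_2$ share these, their completion sets coincide and $A$ is well defined. The main obstacle I anticipate is precisely the rigorous justification of the superseded and deactivated reductions — showing that dropping those structures genuinely loses no information about future occurrences, including a careful treatment of repeated letters and of occurrences that mix letters of $W_i$ with letters of $V$ in the decreasing tail of a $1k(k-1)\dots2$ pattern.
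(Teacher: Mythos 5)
Your strategy is essentially the paper's: fix $W_1$ and $W_2$ realizing the same parameters, note that they admit the same candidate suffixes, and reduce any forbidden occurrence in $W_iV$ to one visible from $(M,\mathcal{S})$ and $V$ alone, by sliding its ``1'' back to an LTR minimum and rerouting through superseded minima (the paper packages this same reduction as a four-way case analysis: LTR min, activated-sequence element, fixed element, copy of the preceding min). One wrinkle you should notice: your rerouting terminates at a non-superseded minimum, which need not be \emph{live} in your sense --- the last LTR minimum can have a deactivated sequence --- so the deactivated case is not a final tidying-up step but the crux of the whole argument.

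And that is where the gap is. You assert that for a deactivated minimum the outcome ``is already determined (and must be impossible, since $W_i$ avoids the pattern).'' The parenthetical is false: ``determined'' refers to the forced future, and the determined outcome can be \emph{guaranteed} even though $W_i$ avoids both patterns so far --- this is exactly the ``guaranteed or impossible'' dichotomy in the paper's definition of deactivation, and your argument silently erases the guaranteed branch. Concretely, take $r=1$, $k=5$ (patterns $1234$ and $15432$), alphabet $[9]$, and $W_2=4598321$, so the letters $6,7$ remain. $W_2$ avoids both patterns; its minimum $4$ has activated sequence $[5,9,8]$, deactivated because the remaining letters $6,7$ are fixed (by the pair $4,5$); its other minima are superseded or deactivated, so its state is $a=0$, $M=[\,]$, $\mathcal{S}=[\,]$. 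Yet both completions of $W_2$ are bad: appending $67$ creates $4,5,6,7$, and appending $76$ creates $4,9,8,7,6$. Meanwhile $W_1=9458321$ (minima $9,3,2$ superseded; $4$ and $1$ deactivated, now with the harmless sequence $[5,8]$) realizes the very same state and has the legal completion $76$. So the number of completions ($0$ versus $1$) is genuinely not determined by the recorded data, and the deactivation step cannot be repaired by appealing to $W_i$'s current avoidance; one must instead exclude the ``guaranteed'' configurations from the states under consideration, which is what the recurrence later enforces when it tests whether $\Fix(m_j,S_j,L)$ contains a $(k-1)(k-2)\dots1$ pattern and discards those transitions. In fairness, the paper's own proof shares this soft spot --- its dichotomy that a fixed element ``either must participate in a forbidden pattern or it cannot possibly do so'' leaves the ``must participate'' case unresolved, and that is precisely the case your parenthetical wrongly rules out --- but as written your deactivation step fails, and with it the proof.
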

\begin{proof}
Suppose that $W_1$ and $W_2$ are two partial words that give the same arguments to $A$.

Given the LTR mins of a partial word, it is easy to see in which order they occurred. It is similarly easy to see in which order the elements of activated sequences occurred, since they are all listed in order in the activated sequence corresponding to the first LTR min (any element of some other activated sequence lower than or equal to the first LTR min would fix that LTR min's activated sequence and thus deactivate it). Finally, we can see how the sequence of LTR mins and the sequence of other elements are interweaved by noting that a non-LTR min occurs after an LTR min if and only if it appears in that min's activated sequence. Therefore, the subwords formed by the LTR mins and activated strings of $W_1$ and $W_2$ are identical.

Suppose $A$ is not well-defined; then there is some string which can be added to (without loss of generality) $W_1$ without creating a forbidden pattern, but which does create a forbidden pattern when added to $W_2$. By the argument of the previous paragraph, there is an element of $W_2$ which is neither an LTR min nor part of an activated sequence, but which does participate in this pattern. 

But, this is not possible. Every element in $W_2$ is an LTR min, part of an activated sequence, fixed, or a copy of the LTR min immediately preceding it. We have assumed that the first two cases do not hold. The third case similarly cannot hold because when an element is fixed, so are all the elements larger than its LTR min, which is to say all the elements which could conceivably be part of a forbidden pattern with it. Therefore, every fixed element either must participate in a forbidden pattern or it cannot possibly do so. Finally the fourth case cannot hold because any forbidden pattern involving a copy of the immediately preceding LTR min could also be formed with that LTR min. Thus we have a contradiction.
\end{proof}

Next, we want to establish bounds on $s$ and $t$ as well on the number of elements in any $S_i$. These bounds should depend only on $r$ and $k$.

\begin{thm}
Bounds for $t$, $u$, and all $|S_i|$ are as follows: $t \le 6(k-2)+2$, $u \le 2r(k-2)+1$, and $|S_i| \le 2r(k-2)$ for all $1 \le i \le u$.
\end{thm}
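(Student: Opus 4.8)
The common engine behind all three bounds is an Erd\H{o}s--Szekeres estimate adapted to words whose letters have bounded multiplicity. I would first isolate the structural constraint on a single activated sequence: if $M$ is an active LTR min, then $M$ together with any strictly increasing length-$3$ subsequence of its activated sequence is a $1234$ pattern, and $M$ together with any strictly decreasing length-$(k-1)$ subsequence is a $1k(k-1)\dots2$ pattern. Hence, as long as the sequence is active, it has no strict ascending subsequence of length $3$ and no strict descending subsequence of length $k-1$.

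Next I would prove the multiplicity-aware bound as a standalone lemma. Label each position $p$ of a word by the pair $(\iota_p,\delta_p)$, where $\iota_p$ (resp.\ $\delta_p$) is the length of the longest strictly increasing (resp.\ strictly decreasing) subsequence ending at $p$. If two positions $p<q$ carry the same label they must carry the same letter, since $w_p<w_q$ forces $\iota_q>\iota_p$ and $w_p>w_q$ forces $\delta_q>\delta_p$; as each value occurs at most $r$ times, at most $r$ positions share any one label. Because $\iota_p\le 2$ and $\delta_p\le k-2$, there are at most $2(k-2)$ labels, so any such sequence has length at most $2r(k-2)$. Applied to each activated sequence this gives $|S_i|\le 2r(k-2)$ directly.

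For $u$ the natural plan is to manufacture a single witness sequence of length $u-1$ to which the same lemma applies. Since an active LTR min other than the last is never superceded, between consecutive active LTR mins $M_i$ and $M_{i+1}$ there is an element $y_i>M_i$; collect $y_1,\dots,y_{u-1}$ in position order. Any strictly increasing triple $y_{i_1}<y_{i_2}<y_{i_3}$ is dominated from below by $M_{i_1}$ (which precedes $y_{i_1}$ and is smaller), producing a $1234$, so the $y_i$ admit no strict ascent of length $3$. The hard part, and the crux of this bound, is the descending direction: a strictly decreasing run $y_{i_1}>\dots>y_{i_{k-1}}$ needs one smaller ``$1$'' lying before all of them to create $1k(k-1)\dots2$, and the obvious candidate $M_{i_1}$ need not lie below the later, smaller-valued $y$'s. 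Overcoming this will require a more careful choice of witness or anchoring minimum; once the descending run is capped at $k-2$, the lemma yields $u-1\le 2r(k-2)$.

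Finally, for $t$ I would bound the span of relevant values rather than their count. The top of the window lies in the activated sequence of the first active LTR min, which has at most $2(k-2)$ distinct values and so sits within $2(k-2)$ of $M_1$, while the bottom is controlled by the smallest active LTR min $M_u$; it then remains to bound the value-span $M_1-M_u$ together with the fully-used but still-tracked values interleaved among the active LTR mins, and to add the three contributions to reach $6(k-2)+2$. I expect the principal obstacle here, as with $u$, to be the bookkeeping that controls how the activated sequences, the fixed letters, and the deactivation and superceding rules interact---specifically, showing that only boundedly many fully-placed ``dead'' values can persist inside the window.
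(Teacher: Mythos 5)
Your bound on $|S_i|$ is complete and matches the paper's reasoning: the paper simply invokes Erd\H{o}s--Szekeres to cap an activated sequence at $2(k-2)$ distinct letters and multiplies by the $r$ copies allowed per letter; your labeling lemma is a fine self-contained substitute for that step. The other two bounds, however, contain genuine gaps, both of which you yourself flag. For $u$, the ingredient you are missing is a fact the paper establishes inside the proof that $A$ is well defined: \emph{every element of every still-active activated sequence is strictly greater than the first LTR min $M_1$}. Indeed, such an element follows its own LTR min, hence has a smaller letter before it, so if it were $\le M_1$ it would fix all letters above $M_1$ and thereby deactivate $S_1$, contradicting that $S_1$ is still active. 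Granting this, your own plan closes immediately, because $M_1$ (not $M_{i_1}$) is the correct anchor: all your witnesses $y_1,\dots,y_{u-1}$ exceed $M_1$ and follow it, so a strict $3$-ascent among them yields a $1234$ and a strict $(k-1)$-descent yields a $1k(k-1)\dots2$; your lemma then gives $u-1\le 2r(k-2)$. The paper's own route is shorter and needs no second Erd\H{o}s--Szekeres application: since every activated element lies above $M_1$, it lies in $S_1$, and likewise in every earlier sequence, so $S_1\supseteq S_2\supseteq\dots\supseteq S_u$; moreover each consecutive containment is strict, because $M_j$ not being superceded means $S_j$ acquired an element before $M_{j+1}$ appeared, and that element cannot belong to $S_{j+1}$. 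Hence $u\le |S_1|+1\le 2r(k-2)+1$.

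For $t$ you offer a plan rather than a proof, and the plan aims at the wrong quantity. Bounding the value-span $M_1-M_u$ in the original alphabet is hopeless---consecutive LTR mins can be arbitrarily far apart in value---and your claim that the activated sequence of $M_1$ ``sits within $2(k-2)$ of $M_1$'' is false for the same reason: few distinct values does not mean nearby values. What makes $t$ bounded is not a span estimate but the scheme's renumbering: letters never yet used sit below the window with $r$ copies, and letters that are used up and appear in no $M$ or $S_i$ are deleted from the window (this is the $\Rem$ operation), so the window need only contain letters that genuinely require tracking. The paper's (admittedly terse) argument is then a census of those letters: distinct letters serving as active LTR mins (at most $2(k-2)+1$), distinct letters occurring in activated sequences (at most $2(k-2)$, since all activated elements lie in $S_1$, which has at most $2(k-2)$ distinct letters), and the largest $2(k-2)+1$ letters still available to be played (only these can become new LTR mins, since a new LTR min's eventual activated sequence must absorb every available letter above it and so tolerates at most $2(k-2)$ distinct such letters). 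Summing the three categories gives $6(k-2)+2$. Your ``dead values'' worry is thus resolved by removal and renumbering, not by bounding how many such values persist.
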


\begin{proof}
Recall that the Erd\H os-Szekeres Theorem states that any sequence of distinct real numbers of length $(p-1)(q-1)+1$ must contain either a length$-p$ increasing sequence or a length$-q$ decreasing sequence (see \cite{ESz}). Since every activated sequence must avoid both 123 and $(k-1)(k-2)\dots1$, it follows that no activated sequence can have more than $2(k-2)$ distinct letters. Since there are at most $r$ copies of any single letter, the longest an activated sequence could possibly be is $2r(k-2)$. Each activated sequence must either have some element that the next one lacks or correspond to the most recent LTR min (or else its LTR min would be superseded), and in the proof of Theorem \ref{well-defined} we showed that the first activated sequence must contain all the elements of every other activated sequence. Therefore, there can be at most $2r(k-2)+1$ activated sequences, and we have successfully bounded both $u$ and the size of any $S_i$.  

To find a bound on $t$, note that as soon as we have used all $r$ copies of a letter and none of those copies remain as either LTR mins or in activated sequences, we can ignore that letter entirely, secure in the knowledge that if it is not already part of a forbidden pattern, it never will be. Thus, we only need to keep track of letters which are LTR mins, are in activated sequences, or are among the largest $2(k-2)+1$ letters still available to be used. By the reasoning of the previous paragraph, at most $2(k-2)+1$ distinct letters are LTR mins, at most $2(k-2)$ are in activated sequences, and so we need to keep track of $6(k-2)+2$ letters all together; all other letters either have never been used and so have $r$ copies remaining or else have had all copies used and can no longer participate in forbidden patterns.
\end{proof}

After Example \ref{ex}, we commented that there may be several ways to describe a given partial word using $a,L,M,$ and $\mathcal{S}$. To allow for unique descriptions, we adopt the convention that $|L| = t = 6(k-2)+2$.

\subsection{Finding the Recurrence}

At this point, we have finished parts (i) and (ii) of our program; all that's left to show is that $A$ can be computed using a recurrence. To this end, we introduce three new functions. $\Fix(m,S,L)$ returns $S$ with all available letters (with counts determined by $L$) which exceed $m$ appended in decreasing order, $\Red(L,i)$ returns $L$ with the $i\tss{th}$ element decreased by one, and $\Rem(r,M,\mathcal{S},L,i)$ returns the tuple $M,\mathcal{S},L$ with the following changes: all elements of $M$ and all elements of every $S$ in $\mathcal{S}$ that are below $i$ are incremented by one, the $i\tss{th}$ element of $L$ is deleted and $r$ is prepended to $L$. 

\begin{eg}\label{fix}
$\Fix(1,[2,3],[1,1,1,2,2]) = [2,3,5,5,4,4,3,2].$
\end{eg}

\begin{eg}\label{reduce}
$\Red([1,1,1,2,2],2) = [1,0,1,2,2].$
\end{eg}

\begin{eg}\label{remove}
$\Rem(2,[1],[[2,4]],[1,1,0,1,2],3) = ([2],[[3,4]],[2,1,1,1,2]).$
\end{eg}

For fixed $r$ and $k$, the base cases are $A(r,k,a,M,\mathcal{S},L)$ for all $M,\mathcal{S},  L$ and $0\le a \le t$. Otherwise, there are no more than $2(k-2)+1$ possibilities for the next letter $i$ (corresponding to the largest $2(k-2)+1$ nonzero entries of $L$) which we divide into $u+1$ cases: when $i \le m_u$, when $m_h < i \le m_{h-1}$ for $2 \le h \le u-1$, and when $m_1 < i$. Suppose that $i \le m_u$; then we calculate the number of ways to complete the word after adding an $i$ as follows.

Suppose that $S_u = []$, then adding an element less than or equal to the current LTR min will supercede that LTR min. Symbolically, we have $\mathcal{S}' = [S_1,\dots,S_{u-1},[]]$, $M' = [m_1,\dots,m_{u-1},i]$, and $L' = \Red(L,i)$. If $S_u \neq []$, then we are adding a new LTR min while leaving all existing ones in place. This gives arguments $\mathcal{S}' = [S_1,\dots,S_{u},[]]$, $M'=[m_1,\dots,m_u,i]$, and $L' = \Red(L,i)$. Now, suppose that $J = \{j_1,\dots,j_w\}$ is a set of all the the integers $j \in [t]$ such that $L'_j = 0$, and $j$ fails to appear in $M'$ or in any $S \in \mathcal{S}'$. For all $j \in J$ from smallest to largest, update $\mathcal{S}',M',$ and $L'$ by setting $M',\mathcal{S}',L' = \Rem(r,M',\mathcal{S}',L',j)$. We finally have that the number of ways to complete the word after adding an $i$ is $A(r,k,a-|J|,M',\mathcal{S}',L')$.

Alternatively, we may add $i$ such that $i > m_h$ with $h$ chosen as small as possible. Either this $i$ is no larger than the smallest element of $S_1$, or else it is the largest letter that still remains to be added (otherwise a 1234 pattern is inevitable once the largest remaining letter is added). For all $m_j \ge i$, check to see if $\Fix(m_j,S_j,L)$ contains a $(k-1)(k-2)\dots1$ pattern. If so, this choice of $i$ contributes nothing to $A(r,k,a,M,\mathcal{S},L)$. If this is not true for any $j$, then we can add $i$ to our word, but doing so deactivates $S_1,S_2,\dots,S_{h-1}$, and so we forget about those activated sequences and their LTR mins. Thus, we take $\mathcal{S}' = [S_h,\dots,S_u]$, $M'  = [M_h,\dots,M_u]$, and $L' = \Red(L,i)$. As before, suppose that $J = \{j_1,\dots,j_w\}$ is a set of all the the integers $j \in [t]$ such that $L'_j = 0$, and $j$ fails to appear in $M'$ or in any $S \in \mathcal{S}'$. For all $j \in J$ from smallest to largest, update $M',\mathcal{S}',$ and $L'$ by setting $M',\mathcal{S}',L' = \Rem(r,M',\mathcal{S}',L',j)$. Again we have that the number of ways to complete the word after adding an $i$ is $A(r,k,a-|J|,\mathcal{S}',M',L')$.

We have expressed $A(r,k,a,M,\mathcal{S},L)$ as a sum of other terms. Notice that in each of these other terms, the number of letters left to be added (given by $r\cdot a +\sum_{i=1}^t L_i$) decreases by 1; eventually it will decrease below $r\cdot t$ and a base case will apply.

While all the base cases could in principle be computed individually, this would probably be a long and unpleasant task. Fortunately, our recurrence can be easily tweaked to calculate base cases. To do so, simply run the recurrence as given, but anytime $A$ would be called with a negative $a$, replace the first $|a|$ nonzero entries of $L$ with 0 and change $a$ to 0. As it turns out, the only base case that we really need is $A(r,k,0,M,\mathcal{S},[0,0,\dots,0])=1$. A full implementation of this recurrence is available in an accompanying Maple package -- see Section \ref{maple}.

\subsection{From Recurrences to Generating Functions}\label{rigor}
This subsection contains an algorithm for turning the recurrences found in this section and Section \ref{123} into generating functions. Readers interested in a more complete exposition should consult Chapter 4 in \cite{Kauers}. 

Suppose the different terms in our system of recurrences are given by $A_1(n), A_2(n),\dots A_m(n)$. While we could choose $n$ like before and let it be the number of distinct unused letters whose counts do not appear in $L$, the rest of this process will be easier if each $A_i(n)$ depends only on $A_j(n-1)$. To make this happen, we interpret $n$ as the total number of unused letters. For example, we might fix $r=2,k=5$ (note that $t=|L|$ is then chosen to be 20), and let $A_1(n) = A(2,5,n,[],[],[2,2,\dots,2]).$ If we let $A_2(n) = A(2,5,n,[1],[[]],[1,2,\dots,2])$, $A_3(n) = A(2,5,n,[2],[[]],[2,1,2,\dots,2])$ and so on until $A_{21}(n) = A(2,5,n,[20],[[]],[2,\dots,2,1])$, we find the recurrence relation $A_1(n) = \sum_{i=2}^{21} A_i(n-1)$. 

Let $M$ be the matrix whose $i,j$ entry is the coefficient of $A_j(n-1)$ in the recurrence for $A_i(n)$, and let $f_i(x)$ be the generating function $\sum_{n=0}^\infty A_i(n) x^n$. It follows that $f_i(x)$ is a rational function with denominator $\text{det}(xI-M)$ for all $i$. The numerator of each generating function has degree less than the number of rows of $M$, and the coefficients of each one can be determined using the system of recurrences' initial conditions.

Since we are treating $n$ as the total number of letters in a word, we must make the substitution $x^r \mapsto x$ in order to obtain the generating function for the number of words on the alphabet $[n]^r$ avoiding the two patterns.

\section{Computational Results}\label{compute}
The first algorithm presented in this paper, the one which enumerates words avoiding 123 and $1k(k-1)\dots2$, runs very quickly. With $r = 2$, we are able to get generating functions for $k$ as large as 8 (and we could go even further if we chose to). With $r = 3$, we are able to get generating functions for $k$ as large as 7.

Unfortunately, the algorithm in Section \ref{1234} is much slower. In the simplest open case of $r=1, k=5$, we are able to conjecture the generating function to be $\ds \frac{-2x^3+7x^2-6x+1}{2x^4-11x^3+17x^2-8x+1}$, but rigorously deriving it seems to be out of the question without carefully pruning the recurrence. We can also use the recurrence to just generate terms without worrying about finding generating functions. With $r =1$, i.e. in the permutation case, we find ten terms apiece in the enumeration sequences for $k=3,4 \dots, 10$, and could easily get more terms; in fact in the particular case of $k=5$ we found 16 in 20 minutes.

All these results can be found in the output files on this paper's webpage (see Section \ref{maple}).

\section{Future Work}
The driving force behind the argument in this paper is the Erdo\H s-Szekeres theorem; it ensures that we only have finitely many possible letters to add to a word at any point in time. For any pair of patterns which are not of the form $12\dots l, 1k(k-1)\dots 2$, this theorem will not apply, and so it is difficult to see how strategies like those in this paper could work.

It does seems reasonable to hope that they would work for other patterns of the form $12\dots l, 1k(k-1)\dots 2$. The only problem with applying them to the pair 12345, $1k(k-1)\dots 2$ is that we lose the fact that any element greater than an LTR min immediately fixes all elements above it. As a result, it is possible to have multiple activated strings, neither of which is a subset of the other. However, we are hopeful that some clever idea can get around this obstacle.

\section{Maple Implementation}\label{maple}
This paper is accompanied by three Maple packages available from the paper's website: \url{http://sites.math.rutgers.edu/~yb165/SchemesForWords/SchemesForWords.html}. The packages are {\tt 123Avoid.txt} which implements the recurrence described in Section \ref{123}, {\tt 123Recurrences.txt} which uses this recurrence to rigorously find the generating functions enumerating the words on $[n]^r$ avoiding 123 and $1k(k-1)\dots2$, and {\tt 1234Avoid.txt} which implements the recurrence described in Section \ref{1234}. It also uses Doron Zeilberger's package Cfinite to automatically conjecture generating functions for the sequences of numbers of words on $[n]^r$ avoiding 1234 and $1k(k-1)\dots2$.

After loading any of these packages, type {\tt Help();} to see a list of available functions. You can get more details about any function by calling Help again with the function's name as an argument. This will also give an example of the function's usage.

\section{Acknowledgements}
The author is grateful to his advisor, Doron Zeilberger, for suggesting the problem to him and for his frequent suggestions and improvements.

\nocite{*}
\bibliography{CountingWords}
\bibliographystyle{abbrvnat}

\end{document}